\newtheorem{theorem}{Theorem}[section]
\newtheorem{lemma}[theorem]{Lemma}
\theoremstyle{definition}
\newtheorem{remark}[theorem]{Remark}
\newcommand{\ZZ}{ \ensuremath{\mathbb{Z}}}
\newcommand{\CC}{ \ensuremath{\mathbb{C}}}
\newcommand{\HH}{ \ensuremath{\mathbb{H}}}
\newcommand{\RR}{ \ensuremath{\mathbb{R}}}
\begin{document}

\title
{Polyhedrons and PBIBDs from hyperbolic manifolds}

\author{Eran Nevo}
\address{
Department of Mathematics,
Ben Gurion University of the Negev and the Hebrew University of Jerusalem, Israel
}
\email{nevo@math.huji.ac.il}


\thanks{
Research partially supported by an NSF Award DMS-0757828 and by an ISF grant 805/11.
}


\maketitle
\begin{center}
\emph{Dedicated to the memory of dear William Thurston}
\end{center}

\begin{abstract}
By taking quotients of a certain tiling of hyperbolic plane / space by certain group actions, we obtain geometric polyhedra / cellulations with interesting symmetries and incidence structure.
\end{abstract}

\section{Introduction}
I was fortunate to know and work with Bill Thurston.
I am very thankful to Bill for this. We met when I was a postdoc at Cornell, and tried to figure out how many $n$-vertex triangulations the $3$-sphere has, see \cite{Pfeifle-Ziegler} for the bounds known at that time\footnote{In the course of publication, with F. Santos and S. Wilson we obtained almost tight bounds for this problem by improving the lower bound to $2^{\Omega(n^{2})}$ \cite{Nevo-Santos-Wilson}.}.
%
We construct $3$-dimensional cellulations $X_q$ made of octahedral cells on which $PSL_2(q)$ acts transitively. On the one hand, modifying the $X_q$ to manifolds yields a construction of $2^{\Theta(n^{3/2})}$ $n$-vertex triangulations of $3$-manifolds (which, by recent \cite{Nevo-Santos-Wilson}, is not so many).
On the other hand, the symmetries of $X_q$ lead to partially balanced incomplete block designs (PBIBDs) which seem to be new.

PBIBDs are defined via association schemes, bridging algebraic graph theory and design theory, with applications in various areas -- e.g. for the design of statistical experiments, for software testing and cryptography. In mathematics, PBIBDs arise from other contexts as well, for example from finite geometries and finite groups.
In Section \ref{sec:PBIBD} we give the relevant definitions; for further background, as well as references to the wide literature on combinatorial designs, see e.g. \cite{Cameron-VanLint, HandbookCombiDesigns, Street-Street}.

Specifically, our construction yields for every $q>5$ which is either a prime equals $1$ modulo $4$ or a square of a prime $p$ with $p$ equals $3$ modulo $4$,
a PBIBD
with blocks of size $6$, $v=\frac{q^2-1}{4}$ vertices, $b=\frac{q(q^2-1)}{24}$ blocks, each vertex is in exactly $q$ blocks,
and while the number of associated classes grows linearly with $q$, each $2$-subset can be only in $\lambda_i=4$ or $1$ or zero blocks.


The construction is based on quotients of hyperbolic $3$-space tiled by octahedra, by certain discrete subgroups of the Picard group. Applying this method to the hyperbolic plane tiled by ideal triangles, we recover polyhedral surfaces constructed by McMullen \cite{McMullen:PLG91} and Wajima \cite{Wajima89}.

Outline: in Section \ref{sec:Construction} we describe the construction, prove its relevant fundamental properties, and use it to obtain triangulated $3$-manifolds, in Section \ref{sec:PBIBD} we review PBIBDs and view our construction as a PBIBD, in Section \ref{sec:Surface} we relate a variation of the construction to polyhedral surfaces.


\section{Construction}\label{sec:Construction}

Our construction is based on discrete subgroups of the Picard group acting on hyperbolic space.
Let $\ZZ[i]$ be the gaussian integers. Then $\Gamma:=PSL_2(\ZZ[i])$ is a discrete subgroup of $PSL_2(\CC)$ which acts on the hyperbolic $3$-space $\HH=\HH^3$ by orientation preserving isometries, given by M\"{o}bius transformations on the boundary $\CC\cup\{\infty\}$.
The quotient space $\HH/\Gamma$ is an orbifold: topologically it is the 3-sphere minus one cusp point.  It is the double of a  simplex having a chain of 3 edges with angles $\pi/4, \pi/4, \pi/3$ and all other angles $\pi/2$, Cf.\cite[Picture $D=-4$]{Hatcher:Bianchi}.

If $G$ is any torsion-free subgroup  of $\Gamma$ with index $[\Gamma:G]=m$, then $\HH/G$ is a $3$-manifold (with cusps) which is an $m$-cover of $\HH/\Gamma$, where each cusp of $\HH/G$ is a torus.
In this case, $G$ intersects trivially the orientation preserving isometry group of the octahedron, denoted by $O$, and hence a tiling of $\HH$ by octahedra, on which $\Gamma$ acts, induces a tiling of $\HH/G$ by octahedra, on which $\Gamma / G$ acts.

The tiling of $\HH$ we have in mind is the following: in the upper half space model, look on the radius $\frac{1}{\sqrt{2}}$ hemispheres with centers at $(\frac{1}{2}+\frac{1}{2}i)+\ZZ[i]$ in the bottom plane $\CC$. The intersections with the fixed hemisphere with center at $\frac{1}{2}+\frac{1}{2}i$ consist of $4$ arcs bounding a square on this hemisphere (its vertical projection on the bottom plane $\CC$ is the unit square with vertices $0,1,i,1+i$), above which is the upper half of an ideal octahedron $T$, and its other half is obtained by inversion of the upper half w.r.t the hemisphere with center at $\frac{1}{2}+\frac{1}{2}i$.
Thus $\frac{1}{2}+\frac{1}{2}i$ and $\infty$ are opposite vertices in this octahedron, denoted by $T$.

Note that $\Gamma\cap O$ has order $12$, and acts transitively on the vertices, and on the edges of $T$ (but not on the (vertex,edge) flags). In fact, using the Euclidean octahedron, the elements of $\Gamma\cap O$ are rotations by multiples of $\pi/3$ degrees around either of the $4$ axes through centers of opposite triangles, and rotations by multiples of $\pi$ (but not $\pi/2$\ !) degrees around either of the $3$ axes through opposite vertices. We omit the computation.

The quotient of $T$ by $\Gamma\cap O$ gives the Ford fundamental domain $\HH/\Gamma$ described earlier.
The tiling of $\HH$ by octahedra we look at consists of the octahedra $\Gamma(T)$.
See \cite{Hatcher:HyperbolicStructure} for more details and pictures.

For any prime ideal $I$ of $\ZZ[i]$ with quotient field $F_q$ of order $q$, let $\Gamma_0(I)$ be the kernel of the surjective map $PSL_2(\ZZ[i] )\to PSL_2(\ZZ[i] / I)$.  If the characteristic of $F_q$ is not $2$, then $\Gamma_0(I)$ is torsion-free.
This is known. As we could not find a reference, here is a proof.
Indeed,
the characteristic polynomial of an element $A\in \Gamma_0(I)$ over $\CC$ is $p_A(x)=x^2-(2+a)x+(1+b)$ where $a,b\in I$.
If $A$ is torsion, $A$ must be diagonalizable over $\CC$, with eigenvalues being two conjugated roots of unity in the set $\{\pm 1,\pm i,\pm \sqrt{i},\pm i\sqrt{i}\}$.
If $F_q$ has characteristic not $2$, then $p_A(x)$ is not in $\{x^2+1, x^2-i, x^2+i\}$, hence $p_A(x)=(x-1)^2$ and
 $A$ must be the identity.

Assume the characteristic of $F_q$ is not 2.

\begin{lemma}\label{lem:count cusps and octahedra}

1. $[\Gamma:\Gamma_0(I)]=\frac{q(q^2-1)}{2}$ (which is divisible by 12). Hence the tiling above induces a tiling of $\HH/\Gamma_0(I)$ by $\frac{q(q^2-1)}{24}$ octahedra.

2. The stabilizer of a cusp in $PSL_2(F_q)$ has size $2q$. Hence there are $\frac{q^2-1}{4}$ cusps in $\HH/\Gamma_0(I)$.

3. Each cusp of $\HH/\Gamma_0(I)$ is a torus tiled by $q$ squares, looking like $\ZZ[i]/I$.
\end{lemma}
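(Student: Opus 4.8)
The plan is to reduce all three claims to finite group computations inside $PSL_2(F_q)$, exploiting that $\Gamma_0(I)$ is \emph{normal} in $\Gamma$ with quotient $PSL_2(F_q)$. For part 1, I would first note $[\Gamma:\Gamma_0(I)]=|PSL_2(F_q)|$, since $\Gamma_0(I)$ is the kernel of the surjection onto $PSL_2(F_q)$. Then $|SL_2(F_q)|=q(q^2-1)$, and as the characteristic is odd so that $-I\ne I$, dividing by $2$ gives $|PSL_2(F_q)|=\frac{q(q^2-1)}{2}$. Divisibility by $12$ follows from elementary parity: $q$ is odd, so $(q-1)(q+1)$ is a product of two consecutive even integers and is divisible by $8$, while $(q-1)q(q+1)$ is a product of three consecutive integers and is divisible by $3$; hence $24\mid q(q^2-1)$ and $12\mid\frac{q(q^2-1)}{2}$. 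For the octahedra count, $\Gamma/\Gamma_0(I)=PSL_2(F_q)$ acts transitively on the octahedra of the induced tiling, with the stabilizer of one octahedron equal to the image of $\Gamma\cap O$; since $\Gamma_0(I)$ is torsion-free the quotient map is injective on the finite group $\Gamma\cap O$, so this stabilizer has order $12$ and the number of octahedra is $\frac{1}{12}[\Gamma:\Gamma_0(I)]=\frac{q(q^2-1)}{24}$.

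For part 2, the cusps of $\HH/\Gamma_0(I)$ biject with the $\Gamma_0(I)$-orbits on $\mathbb{P}^1(\mathbb{Q}(i))$, which form a single $\Gamma$-orbit (the class number of $\mathbb{Q}(i)$ is $1$); by normality $PSL_2(F_q)$ permutes them transitively, and the stabilizer of the cusp at $\infty$ is the image of $\Gamma_\infty=\{\left(\begin{smallmatrix}a&b\\0&a^{-1}\end{smallmatrix}\right): a\in\{\pm1,\pm i\},\ b\in\ZZ[i]\}$ taken modulo $\pm I$. I would show this image has order $2q$: the unipotent part $b\mapsto\left(\begin{smallmatrix}1&b\\0&1\end{smallmatrix}\right)$ surjects onto the additive group of $F_q$ (order $q$) because $\ZZ[i]\to F_q$ is onto, and the diagonal part coming from the units $\{\pm1,\pm i\}$ contributes a subgroup of order $2$ in $PSL_2(F_q)$. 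Dividing then gives $\frac{q(q^2-1)/2}{2q}=\frac{q^2-1}{4}$ cusps.

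For part 3, the cross-section of the cusp at $\infty$ is the flat torus $\CC/L$, where $L=\mathrm{Stab}_{\Gamma_0(I)}(\infty)$ is a translation lattice. I would identify $L$ with the ideal $I\subset\CC$: intersecting $\Gamma_\infty$ with $\Gamma_0(I)$ kills the diagonal units $\pm i$ and forces $b\in I$, leaving exactly the translations $z\mapsto z+b$ with $b\in I$. The octahedra incident to $\infty$ cut the horosphere in the standard unit-square tiling with vertex set $\ZZ[i]$; passing to $\CC/I$ yields a square tiling of the torus whose vertex set is $\ZZ[i]/I$. Since $N(I)=|\ZZ[i]/I|=q$, the covolume of $I$ is $q$ times that of $\ZZ[i]$, so the torus is tiled by exactly $q$ unit squares.

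The order count for $|PSL_2(F_q)|$ and the translation-lattice identification are routine; the one genuinely delicate point, which recurs in parts 2 and 3, is showing that the diagonal element coming from the unit $i$ has order exactly $2$ in $PSL_2(F_q)$ and in particular is nontrivial, i.e. that $i\not\equiv\pm1\pmod I$. This is exactly where the hypotheses enter: $(i-1)(i+1)=-2\notin I$ because the characteristic is not $2$, and $I$ is prime, so neither $i-1$ nor $i+1$ lies in $I$. Getting this bookkeeping with the center $\{\pm I\}$ correct is the main thing to be careful about.
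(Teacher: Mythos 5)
Your proposal is correct and follows essentially the same route as the paper: part 1 by identifying $[\Gamma:\Gamma_0(I)]$ with $|PSL_2(F_q)|$ and dividing by $|\Gamma\cap O|=12$, parts 2 and 3 by transferring stabilizers to the quotient via normality of $\Gamma_0(I)$, computing the image of $\mathrm{Stab}_\Gamma(\infty)$ to have order $2q$, and identifying the cusp torus with the unit-square tiling of $\CC$ modulo $I$, i.e.\ with $\ZZ[i]/I$. The only departures are in justifications of sub-steps: you invoke the class number of $\mathbb{Q}(i)$ for cusp-transitivity where the paper uses edge-transitivity of $\Gamma$ on the tiling, and you make explicit two points the paper leaves implicit, namely that $i\not\equiv\pm 1\pmod{I}$ (which is exactly what makes $|\mathrm{Stab}(\infty)|=2q$ and the square count correct) and that $\mathrm{Stab}_{\Gamma_0(I)}(\infty)$ is precisely the translation lattice $I$.
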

\begin{proof}
For part (1) compute $[\Gamma:\Gamma_0(I)]=|PSL_2(F_q)|=\frac{q(q^2-1)}{2}$.
The number of octahedra is $\frac{[\Gamma:\Gamma_0(I)]}{|\Gamma\cap O|}=q(q^2-1)/24$.

For parts (2) and (3), it is enough to consider the cusp ``at infinity" (namely, the one \emph{not} in the bottom plane $\CC$), as $\Gamma$ acts transitively on the edges of the tiling of $\HH$, hence acts transitively on the cusps.

Note that whenever a group $G$ acts on a space $X$ and $H$ is a normal subgroup of $G$, then the action of $G/H$ on $X/H$ satisfies for any $x\in X$ and its image $\bar{x}\in X/G$ that their stabilizers are related by
$$
\rm{Stab}_{G/H}(\bar{x})=\rm{Stab}_{G}(x)H/H
.$$
Clearly $\rm{Stab}_{\Gamma}(\infty)=\{[^{u \ x}_{0 \ u^{-1}}]: u,x\in \ZZ[i], u=1,i\}$.
Hence, the stabilizer in $PSL_2(F_q)$ of the cusp at infinity in $\HH/\Gamma_0(I)$, denote it simply by $\rm{Stab}(\infty)$, consists  of the distinct representative matrices $[^{u \ x}_{0 \ u^{-1}}]$ with $x\in F_q$ and $u\in\{1,i\}$, thus it has size $2q$. By (1), the number of cusps is $(q(q^2-1)/2)/2q=(q^2-1)/4$. The edges at infinity in the tiling of $\HH$ are the lines from the points $\ZZ[i]$ in the bottom plane that are perpendicular to the bottom plane, thus the
torus at infinity in $\HH/\Gamma_0(I)$ has a tiling by $q$ squares and $q$ vertices, isomorphic to $\ZZ[i]/I$.
\end{proof}

Next, we verify that
\begin{theorem}\label{lem:X}
Let $q>5$.
Then the one-point compactification at each cusp yields a strongly regular cellulation.
\end{theorem}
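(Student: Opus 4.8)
The plan is to verify separately the two defining properties of a strongly regular cellulation: (R) each octahedral cell is embedded (its closure is a homeomorphic image, so its $6$ vertices, $12$ edges and $8$ triangles are pairwise distinct), and (I) the intersection of any two distinct cells is a single closed face (possibly empty). Writing $X_q$ for the compactified complex, I note that away from the cone points $X_q$ is the manifold $\HH/\Gamma_0(I)$ carrying the octahedral tiling induced from the $\Gamma$-invariant ideal-octahedral honeycomb of $\HH$, the cone points being exactly the $\frac{q^2-1}{4}$ cusp-vertices.

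For (R), I would first record the six ideal vertices of $T$, namely $\infty,0,1,i,1+i$ and $\tfrac12+\tfrac12 i=\tfrac1{1-i}$. Writing each as a fraction $a/c$ with $a,c\in\ZZ[i]$ and reducing modulo $I$, their images in $\mathbb{P}^1(F_q)$ are $\infty,0,1,i,1+i,\tfrac{1+i}{2}$, and a direct check shows these are pairwise distinct as soon as $\mathrm{char}\,F_q\neq 2$ (so that $i$ has order $4$). Since distinct points of $\mathbb{P}^1(F_q)$ necessarily lie in distinct cusps, the six vertices of $T$, and hence of every translate $g(T)$, occupy six distinct cusps. Because $\Gamma_0(I)$ is torsion-free and $\Gamma_0(I)\cap O=\{1\}$, the quotient map is injective on the interior of $T$; together with distinctness of the vertices this gives that the closed octahedron, and therefore every closed cell, embeds with distinct edges and faces.

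For (I), I would argue by lifting a hypothetical bad configuration to $\HH$. The honeycomb upstairs is a face-to-face tiling by convex ideal octahedra, hence is itself strongly regular: two distinct honeycomb cells share at most one face. Consequently, if two distinct cells of $X_q$ met in more than one face, lifting one of them to a honeycomb octahedron $\tilde T_1$ would produce a nontrivial $\gamma\in\Gamma_0(I)$ carrying one honeycomb neighbor of $\tilde T_1$ to another, so that $\gamma$ lies in the finite set $S$ of elements moving $T$ to combinatorial distance at most $2$ in the dual graph. The content of the proof is then the finite verification that no nonidentity element of $S$ reduces to the identity modulo $I$ when $q>5$; the same mod-$I$ bookkeeping rules out the shared-vertex and shared-edge cases. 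At a cone point the relevant check is on its link, the square-tiled torus $\ZZ[i]/I$ of Lemma~\ref{lem:count cusps and octahedra}(3): one must see that this $\{4,4\}$-torus is strongly regular, i.e. that two of its squares meet in a single cell.

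The hypothesis $q>5$ enters precisely here and is sharp. For $q=5$ one has $\tfrac{q-1}{4}=1$, so cusps biject with the six points of $\mathbb{P}^1(F_5)$, and the computation in (R) shows that the vertices of every octahedron already exhaust all six points; thus all octahedra share all six vertices and (I) fails outright. For $q>5$ the refinement $\tfrac{q-1}{4}>1$ supplies enough cusps over each point of $\mathbb{P}^1(F_q)$ to separate the cells, and the torus links become large enough to be strongly regular. I expect the main obstacle to be organizing this finite analysis: enumerating the finitely many adjacency types (two shared triangles, two shared edges, an edge together with an off-edge vertex, two nonadjacent vertices) and confirming in each that the corresponding short group elements avoid $\Gamma_0(I)\setminus\{1\}$ once $q>5$. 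The hyperbolic geometry contributes only the clean upstairs picture; the real work lies in this congruence computation.
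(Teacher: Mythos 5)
Your treatment of regularity is essentially sound, and your route to it is in fact a little cleaner than the paper's: the paper proves that the six vertices of $T$ lie in six distinct cusps by showing directly that no $\phi\in\Gamma_0(I)$ can send $\infty$ to $0$, $1$ or $\frac{1}{1-i}$ (using $\phi(\infty)=\frac{ap+1}{cp}$), whereas you use the $\Gamma_0(I)$-invariant reduction map to $\mathbb{P}^1(F_q)$, which gives the same conclusion at once. Your observation that $q=5$ is exactly the degenerate case (all octahedra then share all six cusps, the paper's $K_6$ remark) is also correct.

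The gap is in part (I), and it is structural, not merely unfinished bookkeeping. Your mechanism --- a bad intersection downstairs yields $\gamma\in\Gamma_0(I)\setminus\{1\}$ carrying one neighbor of a lift $\tilde T_1$ to another, hence (by normality of $\Gamma_0(I)$) an element of $\Gamma_0(I)\setminus\{1\}$ moving $T$ a bounded dual-graph distance, to be excluded by inspecting a finite, $q$-independent set $S$ --- only applies when the two cells downstairs share a face of positive dimension, so that lifts can be chosen sharing a triangle or an edge upstairs. The dangerous configurations, the ones the hypothesis $q>5$ is actually for, are pairs of cells sharing only cusp vertices, and for these your reduction fails: if two octahedra downstairs share two cusps, their lifts can be arranged to share the single ideal vertex $\infty$, making them top-level octahedra $T+z_1$ and $T+z_2$ with $z_2-z_1\in\ZZ[i]$ arbitrary; the element $\gamma\in\Gamma_0(I)$ identifying the remaining pair of vertices (say $\frac{1}{2}+\frac{1}{2}i+z_1\mapsto\frac{1}{2}+\frac{1}{2}i+z_2$) has unbounded displacement, so neither it nor any $\Gamma$-conjugate of it lies in your finite set $S$. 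Excluding such $\gamma$ is not a finite $q$-independent check; it amounts to understanding $\Gamma_0(I)$-orbits of ideal points, i.e.\ to computations with cusp stabilizers in $PSL_2(F_q)$. That is exactly the content of the paper's key lemma, whose three hard parts your proposal does not reach: no two edges of the quotient join the same two cusps (proved by comparing stabilizers mod $I$, forcing $v\equiv z \pmod p$); the intersection in $PSL_2(F_q)$ of the stabilizers of the cusps of $\frac{1}{2}+\frac{1}{2}i$ and $\infty$ is just the identity and one $\pi$-rotation, so an octahedron is determined by one antipodal pair of cusps; and if $\phi(0)=\frac{1}{2}+\frac{1}{2}i+z$ with $\phi\in\Gamma_0(I)$ then $dp+1$ divides $1+i$, forcing $N(p)\le 5$ --- the one and only place $q>5$ enters --- so an antipodal pair of cusps is never also joined by an edge. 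Calling all of this ``the same mod-$I$ bookkeeping'' conceals that the required verification is of a different kind (stabilizer computations over $F_q$, carried out symbolically for all $q$ at once) and is the actual mathematical content of the theorem; your torus-link remark points at a genuine phenomenon, but strong regularity of the links is a consequence of these stabilizer facts, not an independent route to them.
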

Note that this cellulation gives a pseudomanifold where the vertex links of the added points are tori.
Following \cite{Pfeifle-Ziegler}, recall that a \emph{cellulation} $C$ of a topological space $M$ is a finite CW complex whose underlying space is $M$. $C$ is \emph{regular} if all the closed cells are embedded, and \emph{strongly regular}\footnote{Not to be confused with the similarly named terms in algebraic graph theory.} if in addition the intersection of any two closed cells is a closed cell.

The theorem immediately follows from the following key lemma.

\begin{lemma}\label{lem:combinatorics of tiling}
Let $q>5$. Then the above tiling of $\HH/\Gamma_0(I)$ satisfies:

1. The vertices of each octahedron are in $6$ distinct cusps.

2. No two edges connect the same two cusps.

3. If two octahedra $P_1,P_2$ have pairs of antipodal vertices sharing the same two cusps, then $P_1=P_2$.


4. For any two cusps $a$ and $b$: if there is
an octahedron in the tiling with opposite ends at cusps $a,b$  then no edge of the tiling connects $a$ and $b$.
\end{lemma}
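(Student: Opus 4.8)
\section*{Proof proposal}

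The plan is to encode each cusp by a single arithmetic invariant and to reduce every assertion to a finite computation with the one octahedron $T$. This reduction is legitimate because $\Gamma_0(I)$ is normal in $\Gamma$ and because $\Gamma$ acts transitively on the octahedra, on the edges, and --- through $\Gamma\cap O$, which is transitive on the three main diagonals of $T$ --- on the diagonals of the tiling. First I set up the invariant. Writing a cusp of $\HH$ as a primitive vector $(\alpha,\beta)\in\ZZ[i]^2$ (so the ideal point is $[\alpha:\beta]$, well defined up to the units $\{\pm1,\pm i\}$), reduction modulo $I$ gives $(\bar\alpha,\bar\beta)\in F_q^2\setminus\{0\}$. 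Since $\Gamma_0(I)$ lifts to matrices $\equiv\mathrm{Id}$ mod $I$ in $SL_2(\ZZ[i])$, this class is constant on $\Gamma_0(I)$-orbits and descends to the cusps of $\HH/\Gamma_0(I)$. Using that $SL_2(\ZZ[i])$ is transitive on primitive vectors with unipotent stabiliser and that $SL_2(\ZZ[i])\to SL_2(F_q)$ is onto, this yields a bijection between cusps and $(F_q^2\setminus\{0\})/\mu_4$, where $\mu_4=\langle\bar i\rangle$ is the group of fourth roots of unity in $F_q^*$ (of order $4$ since $\mathrm{char}\,F_q\neq2$); the count $\frac{q^2-1}{4}$ matches Lemma \ref{lem:count cusps and octahedra}. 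The decisive second tool is the determinant $\det[u\mid v]\in\ZZ[i]$ of the matrix with primitive columns $u,v$: it is $SL_2$-invariant, and modulo $I$ it changes only by a factor in $\mu_4$ when the representatives are changed, so $\overline{\det[u\mid v]}$ is a well defined cusp-pair invariant up to $\mu_4$.

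With these in hand the four parts become short. The six vertices of $T$ are $(1,0),(0,1),(1,1),(i,1),(1+i,1),(1,1-i)$, and a direct check shows every pair has determinant either a unit (for the $12$ edges) or an associate of $1+i$, hence of norm $2$ (for the $3$ diagonals); both are nonzero mod $I$ as $\mathrm{char}\,F_q\neq2$. Two equal cusps would force the determinant $\equiv0\pmod I$, so the six cusps are distinct, giving part (1). For part (2), write the two edges as $\gamma e_0$ and $\gamma' e_0$ with $e_0=\{(1,0),(0,1)\}$; the columns of $\bar\gamma,\bar{\gamma'}$ are precisely the endpoint-vectors, so equality of the two cusp-pairs means these columns agree up to $\mu_4$, and $\det=1$ forces $\bar\gamma^{-1}\bar{\gamma'}$ to be a diagonal (or, in the swapped case, anti-diagonal) matrix with entries in $\mu_4$ --- exactly the reduction of the stabiliser of $e_0$ in $\Gamma$. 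Hence $\gamma e_0$ and $\gamma' e_0$ coincide modulo $\Gamma_0(I)$.

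Part (3) runs the same argument for a diagonal $d_0=\{(1,0),(1,1-i)\}$: matching cusp-pairs makes the two endpoint-directions eigendirections of $M:=\bar\gamma_1^{-1}\bar\gamma_2$ with eigenvalues in $\mu_4$ whose product is $1$. A direct check then identifies the finitely many resulting possibilities for $M$ in $PSL_2(F_q)$ with the reduction of $\mathrm{Stab}_\Gamma(T)=\Gamma\cap O\cong A_4$, namely the half-turn about the $d_0$-axis, the two half-turns flipping it, and the identity; normality of $\Gamma_0(I)$ then gives $P_1=P_2$.

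Part (4) is where the bound $q>5$ enters and is the crux of the lemma. Suppose cusps $a,b$ were joined both by a diagonal $D$ and by an edge $E$. Choosing the endpoint representatives supplied by the respective $\Gamma$-translates, the determinant along $D$ is an associate of $1+i$ while along $E$ it is a unit. Since $D$ and $E$ join the \emph{same} cusps, the two sets of representatives differ cusp-by-cusp by elements of $\mu_4$, so reducing the determinants mod $I$ yields $\overline{1+i}\in\mu_4$. The plan is to rule this out: $\overline{1+i}\in\mu_4$ means $1+i\equiv\zeta\pmod I$ for some $\zeta\in\{\pm1,\pm i\}$, and the four differences $1+i-\zeta$ have norms $1,5,1,5$, so they lie in the prime $I$ only when $q=5$. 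Thus for every $q>5$ we obtain a contradiction, proving (4). I expect this final step --- pinning down that the sole obstruction is the norm-$5$ coincidence, i.e.\ that $\overline{1+i}\notin\mu_4$ exactly when $q>5$ --- to be the main point, while the bookkeeping in (2)--(3) that matches the comparison matrices with the reduced stabilisers is the most computation-heavy but routine part.
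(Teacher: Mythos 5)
Your proposal is correct, and it takes a genuinely different route from the paper's proof. The paper argues part by part with explicit matrix computations: for (1) it examines the possible values $\phi(\infty)=\frac{ap+1}{cp}$ for $\phi\in\Gamma_0(I)$; for (2) and (3) it compares cusp stabilizers in $PSL_2(F_q)$ entry by entry (two ideal points lie in the same cusp iff their stabilizers agree modulo $\Gamma_0(I)$, and the intersection of the stabilizers of the two cusps of a diagonal is computed to be just the identity and one half-turn); for (4) it derives the exact equation $(i+1)bp=(dp+1)(1+(1-i)z)$ in $\ZZ[i]$ and uses coprimality of the entries of a determinant-one matrix to force $dp+1$ to divide $1+i$, bounding the norm of $p$ by $5$. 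You instead set up a uniform invariant --- cusps as primitive vectors of $\ZZ[i]^2$ reduced mod $I$ up to the reduced units $\mu_4$, and the determinant of two such vectors as a cusp-pair invariant in $F_q$ up to $\mu_4$ --- after which all four parts reduce to finite checks on the six vertex vectors of $T$ (all $12$ edges have unit determinant, the $3$ diagonals have determinant an associate of $1+i$) together with identifying the comparison matrix with a reduced stabilizer ($\mu_4$-diagonal or antidiagonal matrices for an edge in (2), the reduced Klein four-group inside $\Gamma\cap O$ in (3)), and normality of $\Gamma_0(I)$ finishes. Both proofs localize the hypothesis $q>5$ in the same arithmetic fact, namely the norm-$5$ primes $2+i$ and $1+2i$, but your formulation ($\overline{1+i}\in\mu_4$ if and only if $q=5$) makes the exception transparent, handles all vertex pairs and the swap/no-swap cases at once, and would transfer to other tilings or Bianchi groups, whereas the paper's computations are tied to the specific matrices; the paper's proof, on the other hand, is more elementary and self-contained.

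Two small points to tighten. First, lifts of elements of $\Gamma_0(I)$ to $SL_2(\ZZ[i])$ are only congruent to $\pm\mathrm{Id}$ modulo $I$, since $\Gamma_0(I)$ is the kernel of a map between \emph{projective} groups; this is harmless because $-1\in\mu_4$, but it should be stated, as your well-definedness claims rely on it. Second, only the easy direction of your claimed bijection --- same cusp in $\HH/\Gamma_0(I)$ implies the reduced vectors agree up to $\mu_4$ --- is actually used in parts (1)--(4); the injectivity (the strong-approximation-style argument you only sketch via transitivity on primitive vectors, the unipotent stabilizer, and surjectivity of reduction) can therefore be dropped entirely, or kept merely as a consistency check against the cusp count of Lemma \ref{lem:count cusps and octahedra}.
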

\begin{proof}
(1) As $\Gamma$ acts transitively on the octahedra of the tiling, it is enough to show that the octahedron $T$ has $6$ distinct vertices in $\HH/\Gamma_0(I)$.
As $\Gamma\cap O$ acts transitively on the vertices of $T$ in $\HH$, and includes $\pi$-rotations through antipodal vertices, we only need to check that $\infty$ is in a different cusp then each of the vertices $0,1,\frac{1}{2}+\frac{1}{2}i=\frac{1}{1-i}$.

Let $p$ generate $I$.
Note that for any $\phi\in\Gamma_0(I)$, $\phi(\infty)$ is of the form $\frac{ap+1}{cp}$ for $a,c\in \ZZ[i]$.
Thus, if $\phi(\infty)=1$ then $1=0\pmod{p}$, a contradiction. If
$\phi(\infty)=0$ then again $1=0\pmod{p}$. If $\phi(\infty)=\frac{1}{1-i}$ then $1-i=0\pmod{p}$, but for $q>2$ this is a contradiction.

(2) By the transitive action of $\Gamma/\Gamma_0(I)$ on the cusps, it is enough to show that any two edges in $\HH$ with one end at infinity have the other ends, denoted by $v,z\in \ZZ[i]$ at two different cusps in $\HH/\Gamma_0(I)$.

Let $\phi=[^{0\  -1}_{1\ -v}]$, so $\phi(v)=\infty$ and
$\rm{Stab}_{\Gamma}(v)=\phi^{-1}\rm{Stab}_{\Gamma}(\infty)\phi$
which equals
$\{ \begin{bmatrix}
-bv+d & av+bv^2-dv \\
-b & a+bv
\end{bmatrix}
:\ ad=1,\ a,b,d\in\ZZ[i]\}
$.
Similarly, for $z$ we have $\rm{Stab}_{\Gamma}(z)= \{ \begin{bmatrix}
-b'z+d' & a'z+b'z^2-d'z \\
-b' & a'+b'z
\end{bmatrix}
:\ a'd'=1,\ a',b',d'\in\ZZ[i]\}
$.
If $v,z$ are in the same cusp, they have the same stabilizers modulo $\Gamma_0(I)$, thus for any choice $a,d,b$ as above there are $a',b',d'$ as above such that modulo $p$ the matrices above are equal.
We now show that this implies $v=z \pmod{p}$, say $v=z+tp$, thus for $\psi=[^{1\ tp}_{0\ \ 1}]\in \Gamma_0(I)$, $\psi(z)=v$ completing the proof.
Indeed, the above 4 equations give, modulo $p$, $b'\equiv b$ by bottom-left entry, substituting into top-left entry gives $d'\equiv d-b(v-z)$ and into bottom-right entry gives $a'\equiv a+b(v-z)$, finally substituting all these into the top-right entry gives $bv^2\equiv 2bv(v-z)+bz^2$. Assuming $z\neq v \pmod{p}$ gives $z+v\equiv 2v$, thus $z\equiv v$ after all.

(3) As $\Gamma$ acts transitively on the octahedra in the tiling and
$\Gamma \cap O$ acts transitively on the vertices of $T$, it is enough to show that if $T'$ is an octahedron in the tiling  which shares the cusps at $\frac{1}{2}+\frac{1}{2}i$ and $\infty$ with (the image of) $T$
in $\HH/\Gamma_0(I)$, then $T'=T$.
Note that there is an element $\psi\in PSL_2(F_q)$ in the intersection of the stabilizers of the above two cusps
mapping $T'$ to $T$. Thus, what we need to show is that all the elements in this intersection of stabilizers just rotate the other $4$ vertices of $T$ around the axis from $\frac{1}{2}+\frac{1}{2}i$ to $\infty$.

The stabilizer of the cusp at $\frac{1}{2}+\frac{1}{2}i$ is $\phi^{-1}\rm{Stab}(\infty)\phi$ for $\phi=[^{-1\ \ 0}_{1-i\ -1}]$,
thus consists of matrices of the form
$\begin{bmatrix}
a-b+ib & b \\
a-ia+2ib-d+id & b-ib+d
\end{bmatrix}
$
where
$ad=1$ and $a,b,d\in\ZZ[i]$, up to $\Gamma_0(I)$, while
$\rm{Stab}(\infty)$ consists of the matrices $\begin{bmatrix}
a' & b' \\
0 & d'
\end{bmatrix}$ where
$a'd'=1$ and $a',b',d'\in\ZZ[i]$, up to $\Gamma_0(I)$.

We compute the intersection of these stabilizers in $PSL_2(F_q)$, by equating the two types of matrices.
Modulo $p$, the bottom-left equation gives, for $q>2$,
$i(i+1)b\equiv d-a$.
Either $a=d=1$, or $a=i=-d$. In the first case (using $q>2$ again) we get $b\equiv 0$ and $\psi$ is the identity permutation. In the second case $b\equiv i-1$, substituting in top-left and bottom-right gives $\psi=[^{-i\  i-1}_{0\ \ i}]$ which permutes the vertices of (Euclidean) $T$ by $\pi$-rotation w.r.t. the axis through the vertices $\frac{1}{2}+\frac{1}{2}i$ and $\infty$.


(4) First, let $T'$ be
an octahedron in the tiling with opposite ends at cusps $a',b'$.
We need to show that there is no edge between $a'$ and $b'$.
Again, as the action of $\Gamma$ is transitive on pairs of antipodal ideal vertices of the octahedra, we can assume that $T'=T$ and $a'=\infty, b'=\frac{1}{2}+\frac{1}{2}i$ in $\HH/\Gamma_0(I)$.

If $a'b'$ were also an edge, then there would be a $\phi'\in \Gamma_0(I)$ for which $\phi'(b')\in \ZZ[i]$; equivalently there would be $\phi\in \Gamma_0(I)$ such that $\phi(0)=(\frac{1}{2}+\frac{1}{2}i)+z$ for some $z\in \ZZ[i]$. We show this is impossible when $q>5$:
as $I$ is prime we can write $I=(p)$.
Then $\phi=\begin{bmatrix}
ap+1 & bp \\
cp & dp+1
\end{bmatrix}$,
for some $a,b,c,d\in \ZZ[i]$,
and
$\phi(0)=\frac{bp}{dp+1}=(\frac{1}{2}+\frac{1}{2}i)+z$, yielding the equation
$2bp=(dp+1)(1+i+2z)$, so dividing by $1+i$ we get
$$(i+1)bp=(dp+1)(1+(1-i)z).$$
As $\det(\phi)=1$, $\gcd(bp,dp+1)=1$, and thus the displayed equation gives that $dp+1$ divides $1+i$. This implies that the Gaussian norm of $p$ is at most $5$, contradicting $q>5$.
\end{proof}
\textbf{From pseudomanifolds to manifolds.}
As mentioned, the cellulation from Theorem~\ref{lem:X} is a pseudomanifold where vertex links are tori, denote it by $X$, which we now modify to a polyhedral manifold $M=M(q)$. Each vertex of $X$  will be replaced by $3$ vertices in a way that will make the vertex links $2$-spheres.
Thus, $M$ will have $\Theta(q^{2})$ vertices and $\Theta(q^3)$ octahedra, to independently triangulate each by inserting any of its $3$ diagonals, so for $n=\frac{3(q^2-1)}{4}$ we construct $2^{\Theta(n^{3/2})}$ $n$-vertex triangulations of $M$.

At each cusp, cut along cycles representing the same meridian, to partition the torus into $3$ adjacent cylinders, each cylinder is closed on one boundary component and is open on the other boundary component. In each cylinder, identify all vertices (of the squares) to a single vertex, to get $3$ vertices $a,b,c$ per cusp, forming the boundary of a triangle. Then the link of $a$ is the polyhedral $2$-sphere formed from the cylinder corresponding to $a$ by coning the closed boundary component by say $c$ and compactifying the open boundary component by $b$; similarly we get that the links of $b$ and $c$ are $2$-spheres. 
The link of the edge $ab$ is a $1$-sphere corresponding to the meridian cut; and similarly for the edges $ac$ and $bc$.
We obtained a polyhedral $n$-vertex manifold $M$ as claimed, namely with $\Theta(n^{3/2})$ octahedra. (In fact we can triangulate the non-octahedron non-tetrahedron cells, without introducing new vertices, each in more than one way, but this will not change the asymptotics of the number of triangulations constructed).


\section{PBIBDs}\label{sec:PBIBD}
First, we recall some definitions.
An \emph{association scheme with $m$ classes} on a finite set $Y$ is a partition of $Y\times Y$ into $m+1$ nonempty parts $C_i$ (called \emph{associate classes}), $Y\times Y=\uplus_{0\leq i\leq m}C_i$, with $C_0=\{(y,y):\ y\in Y\}$, and such that (i) for each $0\leq j\leq m$ there is an integer $c_j$ such that for any $x\in Y$, $|\{y\in Y:\ (x,y)\in C_j\}|=c_j$, and (ii) for any $1\leq i,j,k\leq m$ there is an integer $p^k_{i,j}$ such that for any $(x,y)\in C_k$, $|\{z\in Y:\ (x,z)\in C_i\ \wedge \ (z,y)\in C_j\}|=p^k_{i,j}$.

A \emph{PBIBD(m)} with parameters $(v,b,r,k;\lambda_1,\ldots,\lambda_m)$ consists of a set $Y$ of size $v$ and a collection of $b$ subsets (called \emph{blocks}) of $Y$, each of size $k$, such that each $x\in Y$ belongs to exactly $r$ blocks; and there is an association scheme with $m+1$ classes on $Y$, $\uplus_{0\leq i\leq m}C_i$, such that for any two distinct $x,y\in Y$, if $(x,y)\in C_i$ then
$(x,y)$ is contained in exactly $\lambda_i$ of the blocks.

The case $m=1$ gives the classical $2$-designs, also known as BIBDs.
As we shall see, in our case, while $m=m(q)$ grows to infinity with $q$, all $\lambda_i$ but $\lambda_1$ and $\lambda_2$ vanish.

Interpreting the octahedra in our construction as blocks on the ground set of vertices, we notice the following.
\begin{remark}
The pseudomanifold $X=X_q$ constructed in Theorem~\ref{lem:X} (with $q>5$) gives a PBIBD($m$) with the following parameters:
blocks of size $k=6$, $v=(q^2-1)/4$ vertices, $b=q(q^2-1)/24$ blocks, each vertex is in exactly $r=q$ blocks, and each pair is in either
$\lambda_1=4$ blocks (for edges of $X$), or in $\lambda_2=1$ blocks (for diagonals in the octahedra of $X$),
or in $\lambda_i=0$ blocks (for the other pairs of vertices in $X$, $i\neq 1,2$). Lastly, $m=m(q)\geq \frac{q}{8}$.
\end{remark}
Indeed, identifying as usual the action of $G=PSL_2(F_q)$ on the vertices of $X$ (and the octahedra) with the action of $G$ on the left cosets $G/\rm{Stab}_G(\overline{\infty})$ (and the corresponding blocks\footnote{Here we used the fact that $q>5$, so no two octahedra have the same vertex set. The case $q=5$ is not interesting as a PBIBD; see the next remark for what we get when $q=5$.}), gives rise to a (Schurian) association scheme.
As $G$ acts transitively on the edges of $X$, and on the diagonals of $X$,
we get that $\lambda_1=4,\lambda_2=1$ and $\lambda_i=0$ for $2<i\leq m$.
Now $m=m(q)$ equals the number of vertex orbits under the action of $\rm{Stab}(\overline{\infty})$ minus 1, thus, omitting the orbit $\{\overline{\infty}\}$ and recalling that $|\rm{Stab}(\overline{\infty})|=2q$, we get $m \geq \lceil\frac{q^2-5}{4}/2q\rceil \geq \frac{q}{8}$.
The values of the parameters $k,v,b,r$ of the PBIBD above follow from Lemmas \ref{lem:count cusps and octahedra} and \ref{lem:combinatorics of tiling}.

\begin{remark}
In case $q=5$, there are $6$ vertices, $5$ octahedra, and $5$ squares per torus cusp -- so the graph is complete, namely $K_6$.
Identifying each octahedron with its $3$ diagonals, by Lemma \ref{lem:combinatorics of tiling} we obtain a \emph{$1$-factorization} of the edge set of $K_6$, namely a partition of the $15$ edges into $5$ triples where each triple is a perfect matching in $K_6$.
For more on $1$-factorizations on $K_6$ see e.g. \cite[Chapter 6]{Cameron-VanLint}.
\end{remark}

\section{Polyhedral surfaces}\label{sec:Surface}

We can get a sequence of regular polyhedral surfaces by a construction similar to the previous one.
These were constructed by McMullen \cite{McMullen:PLG91} and Wajima \cite{Wajima89}, by a different approach -- starting from the symmetry group, which is slightly bigger than $PSL_2(q)$, and without refereing to the tiling of the hyperbolic plane.

\textbf{The construction}: look on $PSL_2(\RR)$, the group of orientation preserving isometries of the hyperbolic plane $\HH^2$,
given by M\"{o}bius transformations on the upper half plane model.

Tile $\HH^2$ by ideal triangles as follows. In the half plane model, let $\Delta$ be the ideal triangle with vertices $0,1,\infty$, then its images $PSL_2(\ZZ)(\Delta)$ is the tiling of $\HH^2$ we speak of.
For an odd prime $q$ let
$\Gamma_0(q)$ be the kernel of the surjective map $PSL_2(\ZZ)\rightarrow PSL_2(\ZZ/q\ZZ)$, so it is torsion-free. Then $\HH^2/\Gamma_0(q)$ is a surface with cusps, each cusp looks like a $q$-cycle. We make it a compact surface $S$ by compactifying each cusp by adding one point to it.

The following lemma deals with the combinatorics of the surface $S$.
\begin{lemma}
Let $S=S_q$ for odd prime $q$ be as above. Then

1. $S$ is a simplicial complex, where at each vertex $q$ triangles meet.

2. $S$ consists of $v=\frac{q^2-1}{2}$ vertices,  $e=\frac{q(q^2-1)}{4}$ edges and  $t=\frac{q(q^2-1)}{6}$ triangles.

\end{lemma}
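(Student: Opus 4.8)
The plan is to mirror the proofs of Lemmas~\ref{lem:count cusps and octahedra} and \ref{lem:combinatorics of tiling}, using that $PSL_2(\ZZ)$ acts transitively on the triangles, on the edges, and on the ideal vertices of the tessellation $PSL_2(\ZZ)(\Delta)$ (the Farey tessellation), while $\Gamma_0(q)$, being torsion-free as noted just before the lemma, acts freely on $\HH^2$. Consequently the closed cells of $S$ are precisely the $\Gamma_0(q)$-orbits of closed cells of the tessellation, and each count reduces to dividing the index $[PSL_2(\ZZ):\Gamma_0(q)]=|PSL_2(\ZZ/q\ZZ)|=\frac{q(q^2-1)}{2}$ by the order of the image in $PSL_2(F_q)$ of the relevant stabilizer. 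Throughout I would reduce every assertion, by transitivity, to the standard triangle $\Delta=\{0,1,\infty\}$, the edge $\{0,\infty\}$, and the cusp at $\infty$.

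For part (2) I would record three stabilizers. The stabilizer of $\Delta$ in $PSL_2(\ZZ)$ is the cyclic order-$3$ group generated by the rotation $[^{0\ -1}_{1\ -1}]$, which cycles $\infty\mapsto 0\mapsto 1\mapsto\infty$ and whose image in $PSL_2(F_q)$ still has order $3$; since $\Gamma_0(q)$ is torsion-free it stabilizes no triangle, so $t=\frac{|PSL_2(F_q)|}{3}=\frac{q(q^2-1)}{6}$. The stabilizer of the edge $\{0,\infty\}$ is the order-$2$ group generated by $[^{0\ -1}_{1\ \ 0}]$, again with nontrivial image, giving $e=\frac{|PSL_2(F_q)|}{2}=\frac{q(q^2-1)}{4}$; this is consistent with the relation $2e=3t$ that holds once $S$ is known to be a closed triangulated surface. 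Finally, invoking the relation $\mathrm{Stab}_{G/H}(\bar x)=\mathrm{Stab}_G(x)H/H$ already used in Lemma~\ref{lem:count cusps and octahedra}, the stabilizer of the cusp $\infty$ in $PSL_2(F_q)$ is the image of $\mathrm{Stab}_{PSL_2(\ZZ)}(\infty)=\langle[^{1\ 1}_{0\ 1}]\rangle$, a unipotent cyclic group of order $q$ (here the oddness of $q$ matters), so the number of cusps, hence of vertices, is $v=\frac{|PSL_2(F_q)|}{q}=\frac{q^2-1}{2}$.

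For part (1) I would first handle the local picture at $\infty$: its link in the tessellation is the horocyclic path $\dots,-1,0,1,2,\dots$, on which $\Gamma_0(q)\cap\mathrm{Stab}(\infty)=\langle[^{1\ q}_{0\ 1}]\rangle$ acts by translation by $q$. The quotient is therefore a $q$-cycle, which at once shows that the vertex link is a circle (so $S$ is a surface) and that exactly $q$ triangles meet at each compactified vertex. The remaining simplicial-complex checks are the analogues of Lemma~\ref{lem:combinatorics of tiling}(1)--(3), and I would deduce them all from one fact established by reduction modulo $q$ (exactly as in that lemma): writing a cusp $\tfrac{a}{c}$ in lowest terms, its $\Gamma_0(q)$-class is determined by $\pm\binom{a}{c}\bmod q$, so $[\tfrac{a}{c}]=[\tfrac{a'}{c'}]$ iff $\binom{a}{c}\equiv\pm\binom{a'}{c'}\pmod q$. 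From this: $[0],[1],[\infty]$ are pairwise distinct, so each triangle has three distinct vertices; the edges $\{\infty,n\}$ for $0\le n<q$ reach $q$ distinct cusps, so the $1$-skeleton has no loops and no multiple edges; and the two triangles along any edge have distinct third vertices (here oddness of $q$ rules out the sign $-1$, e.g. $[1]\neq[-1]$), which combined with simplicity of the $1$-skeleton forces a triangle to be determined by its vertex set. Together these yield that $S$ is a simplicial complex.

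I expect the degeneracy analysis of the last paragraph to be the only delicate point; the enumeration is pure bookkeeping with group orders. I anticipate it going through more smoothly than the three-dimensional Lemma~\ref{lem:combinatorics of tiling}, because over $\ZZ$ the unit group is $\{\pm1\}$, trivial in $PSL_2$, so the cusp stabilizer is a single unipotent $\ZZ$ rather than the order-$2q$ group of the octahedral case, and the congruence bookkeeping collapses to the clean criterion above, with the oddness of $q$ killing the $\pm$ ambiguity in every case that arises. As a sanity check, the resulting triple $(v,e,t)$ gives Euler characteristic $\frac{(q^2-1)(6-q)}{12}$, correctly predicting the sphere for $q=3,5$ (the tetrahedron and the icosahedron) and the genus-$3$ Klein quartic, with its $56$ triangles, for $q=7$.
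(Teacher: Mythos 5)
Your proof is correct, and its overall architecture is the same as the paper's: reduce every claim by transitivity to the standard cells $\Delta$, $\{0,\infty\}$, and the cusp at $\infty$; count cells of each dimension by orbit--stabilizer in $PSL_2(F_q)$ (valid since $\Gamma_0(q)$ is normal, so orbits of cells correspond to cosets of the image of the cell stabilizer); and identify each vertex link with the horocycle modulo translation by $q$. Where you genuinely diverge is in the degeneracy checks of part (1): the paper distinguishes cusps by explicitly computing and comparing their stabilizer subgroups in $PSL_2(F_q)$ (e.g.\ showing that equality of $\mathrm{Stab}(v)$ and $\mathrm{Stab}(z)$ forces $v\equiv z \pmod q$), whereas you invoke the classification of $\Gamma_0(q)$-cusps by primitive vectors modulo $q$ up to sign. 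Your route is cleaner: one criterion disposes of all three checks (distinct vertices of a triangle, no multiple edges, distinct third vertices across an edge), with oddness of $q$ entering transparently as $1\not\equiv -1$; it also lets you make explicit a step the paper leaves implicit, namely that simplicity of the $1$-skeleton together with distinct third vertices across each edge forces a triangle to be determined by its vertex set. Two caveats. First, you state the cusp criterion as an \emph{iff}, but only the forward direction --- same cusp implies vectors congruent up to sign, which indeed follows by reducing $\gamma\in\Gamma_0(q)$ modulo $q$ --- is justified by your remark or used anywhere; the converse requires a separate lifting argument, so either supply it, cite it, or weaken the claim to the implication you actually use. Second, oddness of $q$ is not what makes the cusp stabilizer have order $q$ (that holds for any prime); it matters for the order formula $|PSL_2(F_q)|=\frac{q(q^2-1)}{2}$ and for excluding the sign $-1$, as you correctly use later. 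Finally, your edge count via the order-$2$ stabilizer of $\{0,\infty\}$ replaces the paper's count $e=qv/2$ from $q$-regularity of the $1$-skeleton; the two are equivalent, and your version has the small advantage of not depending on part (1) having been proved first.
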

\begin{proof}
(1) First we make sure that the vertices of each triangle are in $3$ different cusps. As $PSL_2(\ZZ)$ acts transitively on the triangles of the tiling of $\HH^2$ we may assume the triangle is $\Delta$, and as $PSL_2(\ZZ)$ contains the $3$ rotations of $\Delta$ it is enough to check that $0$ and $\infty$ are in different cusps of $\HH^2/\Gamma_0(q)$. This we do by showing they have different stabilizers in $PSL_2(q)$.

The stabilizer in $PSL_2(\ZZ)$ of $\infty\in \HH^2$ consists of the distinct representative matrices $[^{1 \ x}_{0 \ 1}]$ for $x\in\ZZ$, so the stabilizer in $PSL_2(q)$ of $\infty\in \HH^2/\Gamma_0(q)$ is $\rm{Stab}(\infty)=\{[^{1 \ x}_{0 \ 1}]:\ x\in F_q\}$. Thus,
$\rm{Stab}(0)=[^{0 \  \  \ 1}_{-1 \ 0}]\rm{Stab}(\infty)[^{0 \ -1}_{1 \  \ 0}]
=\{[^{1 \ 0}_{y \ 1}]:\ y\in F_q\}$.

To complete the proof that $S$ is a simplicial complex we need to check there are no multiple edges between any pair of cusps.
Again, by the transitive action of $PSL_2(\ZZ)$ on the cusps, and on edges of the tiling in $\HH^2$, we only need to show that for two edges from $\infty$, with the other ends at $v,z\in\ZZ$, one has $v\neq z\pmod{q}$. Again, we compare stabilizers.

$\rm{Stab}(v)=[^{v \ -1}_{1 \ \ 0}]\rm{Stab}(\infty)[^{0 \ \ \ 1}_{-1 \ v}]
=\{[^{1-vx \ xv^2}_{-x \ \ 1+vx}]:\ x\in F_q\}$,
and similarly
$\rm{Stab}(z)=\{[^{1-zx' \ x'z^2}_{-x' \ \ 1+zx'}]:\ x'\in F_q\}$.
If these stabilizers are equal, then the bottom-left entry of the matrices gives, modulo $q$, $x\equiv x'$ so for $x$ nonzero by the top-left entries we get $v\equiv z$.

Mod $\Gamma_0(q)$, the horocycle at $\infty$ looks like a $q$-cycle, each of its edges corresponds to a triangle with a vertex at $\infty$. By transitivity of the action of $PSL_2(q)$ on the cusps, all vertex links look the same.

(2)
$|PSL_2(q)|=\frac{q(q^2-1)}{2}$ and as we saw the stabilizer of a cusp has size $q$, so we get $v=\frac{q^2-1}{2}$.
Each cusp looks like a $q$-cycle, so the graph is $q$ regular and we get $e=qv/2$.
As $\Gamma$ contains $3$ of the symmetries of $\Delta$ (by rotations), $t=|PSL_2(q)|/3$.


\end{proof}

We now verify that $S$ we constructed is the same as the polyhedral surfaces described by McMullen in \cite[Theorem 2]{McMullen:PLG91}. We use the notation as appears there, and do not repeat his construction    .

\begin{lemma}\label{lem:McMullen}
For $q>2$ a prime, the polyhedral surface $S=S_q$ constructed above is combinatorially isomorphic to $\Pi=\Pi_q$ constructed in \cite[Theorem 2]{McMullen:PLG91}.
\end{lemma}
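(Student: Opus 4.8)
The plan is to establish a combinatorial isomorphism between $S_q$ and $\Pi_q$ by exhibiting a bijection on vertices that carries triangles to triangles, exploiting the fact that both objects carry a vertex-transitive (indeed flag-like) action of a group closely related to $PSL_2(q)$. The cleanest route is to identify the common symmetry group and show that both surfaces arise as the same orbit structure under it. Since McMullen's construction in \cite[Theorem 2]{McMullen:PLG91} starts from the symmetry group and produces $\Pi_q$ as a quotient, and our $S_q$ is manifestly $\HH^2/\Gamma_0(q)$ with its induced $PSL_2(q)$-action, the task reduces to matching these two group-theoretic descriptions.

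\textbf{First I would} record the numerical invariants of both surfaces and check they agree: by the preceding lemma $S_q$ has $v=\frac{q^2-1}{2}$ vertices, $e=\frac{q(q^2-1)}{4}$ edges, $t=\frac{q(q^2-1)}{6}$ triangles, and is $q$-regular at every vertex; I would confirm these coincide with the corresponding parameters McMullen lists for $\Pi_q$. This is necessary but not sufficient, so \textbf{next I would} set up the vertex identification explicitly. The vertices of $S_q$ are the cusps of $\HH^2/\Gamma_0(q)$, which by the stabilizer computation in the lemma are in bijection with the cosets $PSL_2(q)/\mathrm{Stab}(\infty)$, where $\mathrm{Stab}(\infty)=\{[^{1\ x}_{0\ 1}]:x\in F_q\}$ is the group of upper unitriangular matrices; equivalently, vertices correspond to points of the projective line $\mathbb{P}^1(F_q)$ under the natural $PSL_2(q)$-action, paired under the sign/scalar ambiguity to give $\frac{q^2-1}{2}$ classes. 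I would then read off McMullen's labelling of the vertices of $\Pi_q$ in the same terms and define the bijection by matching these parametrizations.

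\textbf{The key step} is to verify that this vertex bijection sends the triangles of $S_q$ to the triangles of $\Pi_q$. A triangle of $S_q$ is a $PSL_2(\ZZ)$-orbit representative of the ideal triangle $\Delta=\{0,1,\infty\}$ pushed down to $\HH^2/\Gamma_0(q)$; since $PSL_2(\ZZ)$ acts transitively on triangles of the tiling, every triangle of $S_q$ is the image of $\Delta$ under some $g\in PSL_2(q)$, so the triangle set is the single $PSL_2(q)$-orbit of the unordered triple $\{[\infty],[0],[1]\}$ in $\mathbb{P}^1(F_q)$. I would then check that McMullen's triangles form exactly this orbit under the same action, so that the bijection is an isomorphism of simplicial complexes. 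Because both surfaces are connected, orientable, and carry the same face vector, matching the triangle orbits forces the two cell structures to coincide.

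\textbf{The main obstacle} I anticipate is reconciling the two group actions precisely: McMullen's symmetry group is described as \emph{slightly bigger} than $PSL_2(q)$ (it includes an extra reflection or field automorphism), so I must check that the extra symmetries do not alter the underlying \emph{combinatorial} complex, only its automorphism group, and that the orbit of triangles under his larger group restricts to the single $PSL_2(q)$-orbit I identified. Concretely, the delicate point is ensuring that the unordered triple $\{[\infty],[0],[1]\}$ and its translates under McMullen's generators describe the same incidence relation on $\mathbb{P}^1(F_q)$ as ours; once the two vertex-labellings are pinned down in a common coordinate system on $\mathbb{P}^1(F_q)$, this becomes a finite check on generators rather than a genuine difficulty, and connectivity together with the matching $f$-vector closes the argument.
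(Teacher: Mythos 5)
Your overall strategy --- define the vertex bijection $f$ equivariantly from the equality of stabilizers, then show that one triangle maps to a triangle and let transitivity plus counting do the rest --- is viable in principle, and its first half coincides with the paper's proof: the paper also sets $f(g(\infty))=g([0,1])$, well defined because $\mathrm{Stab}(\infty)$ and $\mathrm{Stab}([0,1])$ are both the unipotent group $\{[^{1\ x}_{0\ 1}]:\ x\in F_q\}$. But your key step, as written, rests on a genuine error: the vertices of $S_q$ and of $\Pi_q$ are \emph{not} points of the projective line $\mathbb{P}^1(F_q)$. The projective line has $q+1$ points, each with stabilizer the full Borel subgroup of order $q(q-1)/2$, whereas both surfaces have $(q^2-1)/2$ vertices, each with stabilizer of order $q$; McMullen's vertex set is $(F_q^2\setminus\{0\})/(x\sim -x)$, nonzero vectors modulo sign only, \emph{not} modulo all scalars (your phrase ``sign/scalar ambiguity'' conflates the two quotients). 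The natural map from cusps to $\mathbb{P}^1(F_q)$ is $(q-1)/2$-to-one, so the unordered triple $\{[\infty],[0],[1]\}$ ``in $\mathbb{P}^1(F_q)$'' does not determine a triangle of either surface, and its $PSL_2(q)$-orbit cannot serve as the triangle set: incidence in $\mathbb{P}^1(F_q)$ cannot distinguish the many cusps that lie over the same projective point. (Moreover, since $PGL_2(q)$ is $3$-transitive on $\mathbb{P}^1(F_q)$, essentially all triples of distinct projective points are equivalent, so the orbit description you propose would be nearly vacuous.) Any correct version of your computation must be carried out in $(F_q^2\setminus\{0\})/\pm$.

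Second, even with the correct coordinates, the crux of the lemma is exactly the step you defer: verifying that $\{f(\infty),f(0),f(1)\}$ is a triangle of $\Pi_q$, i.e., matching your base triple against McMullen's explicit description of his triangles. You label this ``a finite check on generators'' and never perform it, while everything you do carry out (the $f$-vector comparison and the vertex bijection) is the easy part that both constructions yield immediately. Note that the paper takes a different route precisely to avoid this coordinate check: it fixes orientations on $S_q$ and $\Pi_q$ induced from the common universal cover $\HH^2$, observes that the links of $\infty$ and of $[0,1]$ are both $q$-cycles, and then invokes the transitivity of $PSL_2(q)$ on \emph{oriented flags} $(v,e,t)$ of both surfaces; flag-transitivity pins down the entire incidence structure from the group action alone, with no need to locate McMullen's triangles explicitly. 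To salvage your orbit-matching approach you would need to (i) replace $\mathbb{P}^1(F_q)$ by vectors modulo sign throughout, and (ii) actually carry out the base-triangle computation against McMullen's definition; as it stands, the proposal proves only the vertex bijection.
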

\begin{proof}
Let $V(S)$ be the vertex set of $S_q$ and $V(\Pi)=(F_q^2-\{0\})/(x\sim -x:\ 0\neq x\in F_q^2)$ the vertex set of $\Pi_q$, and define $f:V(S)\rightarrow V(\Pi)$ as follows. Let $f(\infty)=[0,1]$. Note that the stabilizers in $PSL_2(q)$ of both $\infty$ and $[0,1]$ are the same, namely the matrices $\{[^{1\ x}_{0\ 1}]:\ x\in F_q\}$. Thus, as $PSL_2(q)$ acts transitively on both $V(S)$ and $V(\Pi)$, the map $f(g(\infty)))=g([0,1])$ for $g\in PSL_2(q)$ is a well defined bijection from $V(S)$ to $V(\Pi)$. Fix an orientation on $S$ and $\Pi$, induced by a fixed orientation on their common universal cover, namely $\HH^2$ (in case $q=5$ the universal cover is the $2$-sphere, and both $S_5$ and $\Pi_5$ are the icosahedron).
To see that $f$ induces a combinatorial isomorphism from $S_q$ to $\Pi_q$, first note that the links of both $\infty$ and $[0,1]$ are $q$-cycles. To finish, it is enough to notice that $PSL_2(q)$ acts transitively on \emph{oriented flags} $(v,e,t)$, namely a vertex $v$ in an edge $e$ in a triangle $t$, such that the orientation from $v$ to $e$ to $t$ agrees with the orientation of the surface.
\end{proof}

\section{Concluding remarks}
Hopefully this note will be useful for future research.
Natural questions that arise are to analyze similar constructions where the hyperbolic space / plane is tiled by different tiles, and where quotients by different discrete subgroups are taken.
In this note we have not explored these directions.
For related works from the point of view of abstract polytopes see \cite{Schulte-Egon-Weiss} and references therein, and for constructions of $3$-designs from $PSL_2(q)$ see \cite{3-designs} and references therein.
Further treatment of the PBIBDs constructed was also left out.

The $1$-skeleton of the objects constructed are $q$-regular graphs of order $\Theta(q^2)$. Their expansion properties are another direction not explored here.

\section*{Acknowledgments}
I would like to thank Nir Avni, Misha Klin, Ron Livne, Jacek \'{S}wi\c{a}tkowski and especially Danny Kalmanovich
for helpful discussions, and the anonymous referee for helpful suggestions on the presentation.

\bibliographystyle{plain}
\bibliography{pbiblio}

\begin{thebibliography}{10}

\bibitem{3-designs}
P.~J. Cameron, H.~R. Maimani, G.~R. Omidi, and B.~Tayfeh-Rezaie.
\newblock 3-designs from {${\rm PSL}(2,q)$}.
\newblock {\em Discrete Math.}, 306(23):3063--3073, 2006.

\bibitem{Cameron-VanLint}
P.~J. Cameron and J.~H. van Lint.
\newblock {\em Designs, graphs, codes and their links}, volume~22 of {\em
  London Mathematical Society Student Texts}.
\newblock Cambridge University Press, Cambridge, 1991.

\bibitem{HandbookCombiDesigns}
Charles~J. Colbourn and Jeffrey~H. Dinitz, editors.
\newblock {\em Handbook of combinatorial designs}.
\newblock Discrete Mathematics and its Applications (Boca Raton). Chapman \&
  Hall/CRC, Boca Raton, FL, second edition, 2007.

\bibitem{Hatcher:Bianchi}
Allen Hatcher.
\newblock Bianchi orbifolds of small discriminant.
\newblock {\em unpublished,
  http://www.math.cornell.edu/~hatcher/Papers/Bianchi.pdf}, 1983.

\bibitem{Hatcher:HyperbolicStructure}
Allen Hatcher.
\newblock Hyperbolic structures of arithmetic type on some link complements.
\newblock {\em J. London Math. Soc. (2)}, 27(2):345--355, 1983.

\bibitem{McMullen:PLG91}
Peter McMullen.
\newblock Regular polyhedra related to projective linear groups.
\newblock {\em Discrete Math.}, 91(2):161--170, 1991.

\bibitem{Nevo-Santos-Wilson}
Eran Nevo, Francisco Santos, and Stedman Wilson.
\newblock Many triangulated odd-spheres.
\newblock {\em math arXiv:1408.3501}, 2014.

\bibitem{Pfeifle-Ziegler}
Julian Pfeifle and G{\"u}nter~M. Ziegler.
\newblock Many triangulated 3-spheres.
\newblock {\em Math. Ann.}, 330(4):829--837, 2004.

\bibitem{Schulte-Egon-Weiss}
Egon Schulte and Asia~Ivi{\'c} Weiss.
\newblock Chirality and projective linear groups.
\newblock {\em Discrete Math.}, 131(1-3):221--261, 1994.

\bibitem{Street-Street}
Anne~Penfold Street and Deborah~J. Street.
\newblock {\em Combinatorics of experimental design}.
\newblock Oxford Science Publications. The Clarendon Press, Oxford University
  Press, New York, 1987.

\bibitem{Wajima89}
Masayuki Wajima.
\newblock A note on the automorphism group of certain polyhedra.
\newblock {\em European J. Combin.}, 10(6):597--601, 1989.

\end{thebibliography}

\end{document}